\newtheorem{theorem}{Theorem}
\newtheorem*{theorem*}{Theorem}
\newtheorem{lemma}[theorem]{Lemma}
\newtheorem{proposition}[theorem]{Proposition}
\newtheorem{claim}[theorem]{Claim}
\newtheorem{maintheorem}{Theorem}
\theoremstyle{definition}
\newtheorem{definition}[theorem]{Definition}
\newtheorem*{definition*}{Definition}
\newtheorem*{lemma*}{Lemma}
\numberwithin{equation}{section}
\numberwithin{theorem}{section}
\newcommand{\N}{\mathbb{N}}
\DeclareDocumentCommand\Pr{ m g }{%
    {   \IfNoValueTF {#2}
      {\mathbb{P}\left[{#1}\right]}
      {\mathbb{P}\left[{#1}\middle\vert{#2}\right]}%
    }
}
\DeclareDocumentCommand\E{ m g }{%
    {   \IfNoValueTF {#2}
      {\mathbb{E}\left[{#1}\right]}
      {\mathbb{E}\left[{#1}\middle\vert{#2}\right]}%
    }
}
\newcommand{\cent}[1]{C_G(#1)}
\begin{document}

\title[]{Non-virtually nilpotent groups have infinite
  conjugacy class quotients}

\author[J.\ Frisch]{Joshua Frisch}
\author[P.\ Vahidi Ferdowsi]{Pooya Vahidi Ferdowsi}

\address{California Institute of Technology}


\date{\today}

\begin{abstract}
  We offer in this note a self-contained proof of the fact that a
  finitely generated group is not virtually nilpotent if and only if
  it has a quotient with the infinite conjugacy class (ICC)
  propoerty. This proof is a modern presentation of the original
  proof, by McLain (1956) and Duguid and McLain (1956).
\end{abstract}

\maketitle
\section{Introduction}
It has been long known that a finitely generated group is not
virtually nilpotent if and only if it has a quotient with the infinite
conjugacy class (ICC) propoerty. This result, due to
McLain~\cite{mclain1956remarks} and Duguid and
McLain~\cite{duguid1956fc}, has recently found novel applications in
the study of proximal actions and strong
amenability~\cite{frisch2018strong} and the Poisson
boundary~\cite{frisch2018choquet}. This has motivated us to offer in
this note a self-contained presentation of the original proof of this
fact, which is divided between~\cite[Theorem 2]{mclain1956remarks}
and~\cite[Theorem 2]{duguid1956fc}.

Let $G$ be a group. An element $g\in G$ is said to be a {\em finite conjugacy (or FC) element} if it has only finitely many conjugates in $G$. The {\em FC-center} of $G$ is the set of all FC-elements in $G$. The {\em upper FC-series} of $G$ is defined as follows
$$\{e\}=F_0\leqslant F_1\leqslant \cdots \leqslant F_\alpha \leqslant \cdots,$$
where $F_{\alpha+1}/F_\alpha$ is the set of all FC-elements of $G/F_\alpha$, and $F_\beta = \cup_{\alpha<\beta} F_\alpha$ for a limit ordinal $\beta$. This series will stabilize at some ordinal $\gamma$. $F_{\gamma}$ is called the {\em hyper-FC center} of $G$ and the least such $\gamma$ is called the {\em FC-rank} of $G$. If $G$ is equal to its hyper-FC center, $G$ is called {\em hyper-FC central}. 

\begin{maintheorem}
\label{thm: f.g. virtually nilpotent}
  For a finitely generated group $G$ the following are equivalent.
  \begin{enumerate}
    \item $G$ is virtually nilpotent.
    \item $G$ is hyper-FC central.
    \item $G$ has no non-trivial ICC quotients.
  \end{enumerate}
\end{maintheorem}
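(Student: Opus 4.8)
The plan is to prove the two soft implications $(2)\Leftrightarrow(3)$, which require no finiteness hypothesis, and then the implications $(1)\Leftrightarrow(2)$, where finite generation is essential; the genuine content lies in $(2)\Rightarrow(1)$. Throughout I would use the standard reformulation that a group is ICC exactly when its FC-center is trivial, so that $(3)$ says every nontrivial quotient of $G$ has nontrivial FC-center.

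For $(2)\Leftrightarrow(3)$ I would isolate three facts. First, hyper-FC-centrality passes to quotients: writing $\pi\colon G\to G/N$, one checks by transfinite induction that $\pi(F_\alpha)$ lies in the $\alpha$-th term of the upper FC-series of $G/N$, since the image of an element with finitely many conjugates again has finitely many conjugates; hence $F_\gamma=G$ forces the series of $G/N$ to exhaust $G/N$. Second, a nontrivial hyper-FC-central group has nontrivial FC-center, for if $F_1=\{e\}$ then every $F_\alpha=\{e\}$ and the hyper-FC-center is trivial. Combining these, every nontrivial quotient of a hyper-FC-central group is again hyper-FC-central and so has nontrivial FC-center, i.e. is not ICC, giving $(2)\Rightarrow(3)$. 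Conversely, if $G$ is not hyper-FC-central then $G/F_\gamma$ is nontrivial with trivial FC-center (by stabilization $F_{\gamma+1}=F_\gamma$), hence a nontrivial ICC quotient, giving $(3)\Rightarrow(2)$.

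For $(1)\Rightarrow(2)$ the key observation (Lemma A) is that a nontrivial finitely generated virtually nilpotent group has nontrivial FC-center: passing to the normal core we may take a finite-index \emph{normal} nilpotent subgroup $H$, and either $G$ is finite (so $F_1=G$) or $Z(H)\neq\{e\}$; since $Z(H)$ is characteristic in $H\trianglelefteq G$ it is normal in $G$, and each $z\in Z(H)$ has $C_G(z)\supseteq H$ of finite index, so $z$ is an FC-element. Granting this, suppose the FC-series of $G$ stabilizes at $F_\gamma\neq G$. Then $G/F_\gamma$ is nontrivial, finitely generated, and virtually nilpotent, so by Lemma A its FC-center $F_{\gamma+1}/F_\gamma$ is nontrivial, contradicting stabilization; hence $F_\gamma=G$.

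The hard direction is $(2)\Rightarrow(1)$. The base case is Lemma B: a finitely generated FC-group is center-by-finite, because the centralizers of finitely many generators each have finite index and their intersection is exactly $Z(G)$; such a group is therefore virtually abelian. To leverage this up the series one wants to induct on the FC-rank, passing from $G$ to $G/F_1$ --- again finitely generated and hyper-FC-central, and of strictly smaller rank once the rank is known to be finite --- and invoking an extension lemma, essentially P. Hall's fact that a central extension of a nilpotent group is nilpotent, applied after descending to a finite-index subgroup on which the FC-center becomes genuinely central via the finite-index centralizers supplied by finite generation. The main obstacle, and the heart of McLain's argument, is that this induction is legitimate only once one knows the FC-rank is \emph{finite}: for a finitely generated group the subgroups $F_\alpha$ need not be finitely generated, and at a limit ordinal (or when $\gamma=\delta+1$ with $\delta$ infinite) passing to $G/F_1$ fails to lower the rank. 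So the crucial preliminary step is to show that a finitely generated hyper-FC-central group has finite FC-rank, forcing $\gamma$ to be a finite successor ordinal; I expect this reduction, rather than the extension lemma, to be where the real work lies.
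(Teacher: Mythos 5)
Your reduction to $(2)\Leftrightarrow(3)$ and the two implications $(1)\Leftrightarrow(2)$ matches the paper's architecture, and the soft parts of your proposal are correct: your $(2)\Leftrightarrow(3)$ argument (quotients of hyper-FC-central groups are hyper-FC-central, and stabilization of the series produces an ICC quotient) is the same content as the paper's Proposition~\ref{prop:hyper-fc-center}, phrased without the universal-quotient language; and your $(1)\Rightarrow(2)$ via Lemma~A is a legitimate soft alternative to the paper's explicit construction, which instead promotes the upper central series of a finite-index nilpotent normal subgroup to an FC-series (the paper's version also yields the quantitative bound: rank $n$ gives FC-rank at most $n+1$). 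The problems are in $(2)\Rightarrow(1)$, where you have two genuine gaps. The first you acknowledge yourself: you correctly identify that one must prove that a finitely generated hyper-FC-central group has \emph{finite} FC-rank, and you correctly predict this is where the real work lies --- but you give no argument for it. This is exactly the paper's Claim~\ref{claim:f.g. hyper-FC central}, and its proof is a genuinely nontrivial construction: starting from a finite generating set $X_0=S$, one inductively builds finite sets $X_i\subseteq F_{\alpha_i}$ (enlarging $X_i$ to a set $Y_i$ of representatives of the finitely many conjugate cosets modulo $F_{\alpha_i-1}$, then taking $X_{i+1}$ to consist of the elements $z^{-1}(g^{-1}yg)\in F_{\alpha_i-1}$ with $g\in S$, $y,z\in Y_i$) so that the least ordinals $\alpha_i$ with $X_i\subseteq F_{\alpha_i}$ strictly decrease until they hit $0$; well-ordering of the ordinals then forces $S\subseteq F_n$ for some finite $n$. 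Flagging this step as crucial does not discharge it.

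The second gap is one you do not flag, and as sketched your inductive step would fail. You propose to descend to a finite-index subgroup ``on which the FC-center becomes genuinely central via the finite-index centralizers supplied by finite generation.'' But the centralizer argument (your Lemma~B, the paper's Lemma~\ref{lemma:f.g. fc}) applies only to a \emph{finitely generated} subgroup of the FC-center, and $F_1$ --- or its intersection with the finite-index subgroup $N$ with $N/F_1$ nilpotent --- need not be finitely generated; indeed you note yourself that the terms $F_\alpha$ of a finitely generated group need not be finitely generated, but you deploy that observation only against the rank-reduction issue, not against your own extension step. A priori nothing gives $C_N(F_1)$ finite index. The missing idea, which is the heart of the paper's Claim~\ref{claim:f.g. nilpotent fc}, is to replace $F_1$ by the last nonvanishing term $\Gamma_{m-1}$ of the lower central series of $N$: since $N/F_1$ is nilpotent of class at most $m-1$ one has $\Gamma_{m-1}\leqslant F_1$, so $\Gamma_{m-1}$ consists of FC elements; and $\Gamma_{m-1}$ \emph{is} finitely generated, because it is the normal closure in $N$ of the finitely many $(m-1)$-fold commutators of generators of $N$, and the normal closure of finitely many FC elements is generated by their finitely many conjugates. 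Only then can one take a finite-index normal $M\leqslant C_N(\Gamma_{m-1})$, observe that $Z=\Gamma_{m-1}\cap M$ is central in $M$ with $M/Z$ embedding in the nilpotent group $N/\Gamma_{m-1}$, and conclude by the central-extension fact. Without this substitute for $F_1$ (or some equivalent device), your induction does not close.
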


As a corollary, a finitely generated group is either virtually nilpotent or has an ICC quotient.

\section{Proof}
The following easy, but important, proposition shows that the obstruction to a group being ICC is the hyper-FC center of the group. Before we state the proposition, we define {\em a universal ICC quotient} of a group. This notion is useful to see the relation between the hyper-FC center of a group and its ICC quotients.

\begin{definition}
  Let $G$ be a group. A universal ICC quotient of $G$, which we denote by by $\phi: G\to I$, is a quotient of $G$ onto an ICC group $I$ such that any quotient $\tau: G\to J$ of $G$ onto an ICC group $J$, lifts to a homomorphism $\rho: I\to J$ such that the following diagram commutes.
  $$
  \begin{tikzcd}
  {} & I \arrow{dr}{\rho} \\
  G \arrow{ur}{\phi} \arrow{rr}{\tau} && J
  \end{tikzcd}
  $$
\end{definition}

Now we can state the following proposition.

\begin{proposition}
\label{prop:hyper-fc-center}
  Let $G$ be a group and let $H\trianglelefteq G$ be the hyper-FC center of $G$. The quotient map $\phi: G\to G/H$ is the unique, up to isomorphism, universal ICC quotient of $G$.
\end{proposition}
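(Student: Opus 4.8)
The plan is to verify three things: that $G/H$ is ICC, that $\phi$ enjoys the universal property, and that this characterizes $\phi$ up to isomorphism. The engine behind all three is a single elementary observation, which I would record first: if $\psi\colon A\to B$ is a \emph{surjective} homomorphism, then the image of an FC-element of $A$ is an FC-element of $B$. Indeed, by surjectivity every conjugate of $\psi(a)$ in $B$ has the form $\psi(a'aa'^{-1})$, so the conjugacy class of $\psi(a)$ is the image of the conjugacy class of $a$ and hence finite. In particular $\psi$ carries the FC-center of $A$ into the FC-center of $B$.

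First I would check that $G/H$ is ICC. Since the upper FC-series stabilizes at $\gamma$, we have $F_{\gamma+1}=F_\gamma=H$, and by definition $F_{\gamma+1}/F_\gamma$ is the FC-center of $G/F_\gamma=G/H$. Thus the FC-center of $G/H$ is trivial, which is exactly the assertion that $G/H$ is ICC.

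Next comes the universal property. Let $\tau\colon G\to J$ be any quotient onto an ICC group $J$; it suffices to show $H\leqslant\ker\tau$, since then $\tau$ descends to a (necessarily surjective) homomorphism $\rho\colon G/H\to J$ with $\rho\circ\phi=\tau$. I would prove $F_\alpha\leqslant\ker\tau$ for every $\alpha$ by transfinite induction. The base case $F_0=\{e\}$ and the limit case $F_\beta=\bigcup_{\alpha<\beta}F_\alpha$ are immediate. For the successor step, assume $F_\alpha\leqslant\ker\tau$, so $\tau$ factors as $\bar\tau\circ\pi_\alpha$, where $\pi_\alpha\colon G\to G/F_\alpha$ and $\bar\tau\colon G/F_\alpha\to J$. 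By the observation above, the surjection $\bar\tau$ sends the FC-center $F_{\alpha+1}/F_\alpha$ of $G/F_\alpha$ into the FC-center of $J$; but $J$ is ICC, so that FC-center is trivial. Hence $\bar\tau(\pi_\alpha(F_{\alpha+1}))=\{e\}$, i.e.\ $\tau(F_{\alpha+1})=\{e\}$, giving $F_{\alpha+1}\leqslant\ker\tau$. Taking $\alpha=\gamma$ yields $H\leqslant\ker\tau$, as required.

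Finally, uniqueness is a formal consequence of the universal property. If $\phi'\colon G\to I'$ is another universal ICC quotient, then applying the universal property of $\phi$ to $\phi'$ and vice versa produces $\rho\colon G/H\to I'$ and $\rho'\colon I'\to G/H$ with $\rho\circ\phi=\phi'$ and $\rho'\circ\phi'=\phi$. Then $\rho'\circ\rho\circ\phi=\phi$, and surjectivity of $\phi$ forces $\rho'\circ\rho=\mathrm{id}$; symmetrically $\rho\circ\rho'=\mathrm{id}$, so $\rho$ is an isomorphism. I expect no serious obstacle in this argument; the only point requiring care is the successor step of the transfinite induction, where one must correctly identify $F_{\alpha+1}/F_\alpha$ with the FC-center of $G/F_\alpha$ and invoke surjectivity of $\bar\tau$ to control conjugacy classes.
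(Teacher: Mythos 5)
Your proof is correct and takes essentially the same approach as the paper: your transfinite induction showing $F_\alpha\leqslant\ker\tau$ is precisely the paper's minimal-counterexample argument run in the forward direction (with the same surjectivity-preserves-FC observation at the successor step), and your identification of the FC-center of $G/H$ with $F_{\gamma+1}/F_\gamma=\{e\}$ matches the paper's ICC verification. The only difference is that you spell out the categorical uniqueness argument, which the paper dismisses as a standard fact about universal properties.
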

\begin{proof}
  First, we show that $H$ is in the kernel of any ICC quotient $\tau: G\to J$. Let
  $$\{e\}=F_0\leqslant F_1\leqslant \cdots \leqslant F_\alpha \leqslant \cdots $$
  be the upper FC-series of $G$. If $H$ is not in the kernel of $\tau$, then there exists a minimum ordinal $\alpha$ such that $F_\alpha\not\subseteq \ker{\tau}$. Obviously $\alpha$ is not a limit ordinal. Let $h\in F_\alpha\setminus \ker{\tau}$. Since $hF_{\alpha-1}$ is FC in $G/F_{\alpha-1}$ and $F_{\alpha-1}\subseteq \ker{\tau}$ and $\tau$ is a surjective homomorphism, we get that $\tau(h)$ is FC in $J$. So, $\tau(h)$ is the identity in $J$, which is a contradiction. So, $H$ is in the kernel of any ICC quotient of $G$.
  
  Now, we show that the quotient map $\phi: G\to G/H$ is an ICC quotient. Let $\gamma$ be the FC-rank of $G$. So $H=F_\gamma$. Note that since $F_\gamma = F_{\gamma+1}$, we know that any non-identity element of $G/H = G/F_\gamma$ has infinitely many conjugates, which shows that $G/H$ is ICC.
  
  Thus $\phi: G\to G/H$ is a universal ICC quotient. Uniqueness follows from a standard fact about universal properties.
\end{proof}

An immediate corollary of the above result is that hyper-FC central groups are exactly those with no non-trivial ICC quotients. Theorem~\ref{thm: f.g. virtually nilpotent}, which we prove next, gives a third equivalent condition when the group is finitely generated.

\begin{proof}[Proof of Theorem~\ref{thm: f.g. virtually nilpotent}]
  The equivalence of (2) and (3) follows from the above corollary. We will show that (1) and (2) are equivalent. For that, we first show that the upper FC-series of a finitely generated hyper-FC central group stabilizes at some finite ordinal, i.e.\ the FC-rank is finite.
  
  \begin{claim}
  \label{claim:f.g. hyper-FC central}
    Let $G$ be a finitely generated hyper-FC central group. The upper FC-series of $G$ stabilizes at some $n\in\N$, i.e.\ its FC-rank is finite.
  \end{claim}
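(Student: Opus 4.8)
The plan is to prove the stronger statement that $F_\omega = G$; once this is known, finiteness of the FC-rank is immediate. Indeed, if $G=\langle g_1,\dots,g_k\rangle$ is hyper-FC central and $G=F_\omega=\bigcup_{n<\omega}F_n$, then each generator lies in some $F_{m_i}$ with $m_i<\omega$, so with $M=\max_i m_i$ we get $G=\langle g_1,\dots,g_k\rangle\subseteq F_M$, whence $F_M=G$ and the FC-rank is at most $M$. Thus the entire content of the claim is that the upper FC-series of a finitely generated hyper-FC central group already exhausts $G$ by stage $\omega$. Two standard facts will be used freely: a subgroup of finite index in a finitely generated group is itself finitely generated, and an element is FC if and only if its centralizer has finite index (orbit-stabilizer).

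Suppose toward a contradiction that $F_\omega\subsetneq G$. Since $G$ is hyper-FC central, the series cannot stabilize at $F_\omega$, so the FC-center of $G/F_\omega$ is non-trivial; fix $x\in F_{\omega+1}\setminus F_\omega$, so that $xF_\omega$ is a non-trivial FC-element of $G/F_\omega$. Its centralizer in $G/F_\omega$ has finite index, so its preimage $C=\{g\in G:[g,x]\in F_\omega\}$ has finite index in $G$ and is therefore finitely generated, say $C=\langle c_1,\dots,c_m\rangle$. Each $[c_j,x]$ lies in $F_\omega=\bigcup_{n<\omega}F_n$, so $[c_j,x]\in F_{n_j}$ for some finite $n_j$; set $N=\max_j n_j$.

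The key step is to promote this finite information from the generators to all of $C$. Using the identity $[gh,x]=g[h,x]g^{-1}\,[g,x]$ together with the normality of $F_N$ in $G$, the set $\{g\in G:[g,x]\in F_N\}$ is a subgroup; since it contains each $c_j$, it contains $C$. Hence $[g,x]\in F_N$ for every $g\in C$, i.e.\ the image of $C$ in $G/F_N$ centralizes $xF_N$. As $C$ has finite index in $G$, the centralizer of $xF_N$ in $G/F_N$ has finite index, so $xF_N$ is FC in $G/F_N$ and therefore $x\in F_{N+1}$. But $N+1<\omega$, so $x\in F_\omega$, contradicting the choice of $x$. Therefore $F_\omega=G$, and the claim follows.

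The main obstacle is conceptual rather than computational: the generator argument alone only rules out the FC-rank being a limit ordinal, and in particular does not by itself exclude ranks such as $\omega+1$. The crux is the observation that finite generation lets one replace $F_\omega$ (a union, over which being ``FC at the top'' carries no uniform finite bound) by a single finite stage $F_N$: finitely many generators of the finite-index centralizer each witness membership in some $F_{n_j}$, and normality of $F_N$ upgrades this to the whole centralizer, forcing the putatively new FC-element to have appeared already below $\omega$.
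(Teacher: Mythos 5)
Your proof is correct, but it takes a genuinely different route from the paper's. The paper runs a transfinite descent: starting from a finite generating set $S\subseteq F_{\alpha_0}$, it builds finite sets $X_i\subseteq F_{\alpha_i}$ (first enlarged by conjugate representatives $Y_i$, then replaced by the ``defect'' elements $z^{-1}(g^{-1}yg)$ lying one level down), arranged so that the least ordinal $\alpha_i$ with $X_i\subseteq F_{\alpha_i}$ drops by exactly one at each step; well-foundedness of the ordinals then forces $\alpha_0$ to be some finite $n$, and $S\subseteq F_n$ gives $G=F_n$. You instead isolate the clean intermediate statement that for \emph{any} finitely generated group the series freezes at $\omega$, i.e.\ $F_{\omega+1}=F_\omega$; hyper-FC centrality then gives $F_\omega=G$, and finite generation converts this into $F_M=G$ for a finite $M$. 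Your key step --- replacing $F_\omega$ by a single finite stage $F_N$ using finitely many generators of the finite-index preimage $C$ of the centralizer, the identity $[gh,x]=g[h,x]g^{-1}[g,x]$, and normality of $F_N$ --- plays the structural role that the paper's $Y_i$/$X_{i+1}$ bookkeeping plays: both proofs ultimately promote a condition verified on finitely many generators to an entire group. The trade-off is that your argument is shorter and more modular, but it invokes Schreier's lemma (finite-index subgroups of finitely generated groups are finitely generated), which the paper's proof of this particular claim avoids --- the paper manipulates only finite \emph{subsets}, never finitely generated subgroups --- though the paper does use that same fact later, in the proof of Claim~\ref{claim:f.g. nilpotent fc}. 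One small omission to patch: to conclude that $\{g\in G:[g,x]\in F_N\}$ is a subgroup you should also check closure under inversion, e.g.\ $[g^{-1},x]=g^{-1}[g,x]^{-1}g\in F_N$ by normality of $F_N$; this is immediate and does not affect correctness.
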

  \begin{proof}
    Let $S$ be a finite symmetric generating set for $G$. Let
    $$\{e\}=F_0\leqslant F_1\leqslant \cdots \leqslant F_\alpha \leqslant \cdots \leqslant F_\gamma = G$$
    be the upper FC-series of $G$. We need to show that for some $n\in \N$ we have $S\subseteq F_n$. For that, we will define a sequence $X_0,X_1,\ldots$ of finite subsets of $G$ with the following properties:
    \begin{enumerate}
      \item $X_0=S$.
      \item If $\alpha_i$ is the least ordinal with $X_i\subseteq F_{\alpha_i}$, then either $\alpha_i = \alpha_{i-1} = 0$ or $\alpha_i = \alpha_{i-1}-1$.
    \end{enumerate}
    Given such a sequence, if none of the $\alpha_i$'s are 0, then $\alpha_0,\alpha_1,\ldots$ is an infinite strictly decreasing sequence of ordinals, which is a contradiction. So, some $\alpha_i$ is 0. Let $n$ be the least index with $\alpha_n = 0$. Then $\alpha_0=n$. By the definition of $\alpha_0$, we get that $S=X_0\subseteq F_n$. But since $S$ generates $G$, we get that $G\subseteq F_n$. So the upper FC-series stabilizes at $n$.
    
    Now we define the sequence $X_0,X_1,\ldots$ and prove it has the properties we claimed. Let $X_0=S$. Assume that $X_0,\ldots,X_i$ are defined. We want to define $X_{i+1}$. If $\alpha_i = 0$, then simply let $X_{i+1}=X_i$. And if $\alpha_i\neq 0$, we define $X_{i+1}$ below. First, we make a few observations. 
    \begin{itemize}
      \item Note that since $\alpha_i$ is the least ordinal such that $F_{\alpha_i}$ contains the finite set $X_i$, we get that $\alpha_i$ is not a limit ordinal.
      \item Since $F_{\alpha_i}/F_{\alpha_i-1}$ is the FC-center of $G/F_{\alpha_i-1}$ and $X_i\subseteq F_{\alpha_i}$, we have that $x F_{\alpha_i-1}$ is FC in $G/F_{\alpha_i-1}$ for each $x\in X_i$.
      \item For each $x\in X_i$ and each conjugate of $x F_{\alpha_i-1}$, pick an element of $G$ in that conjugate, and let $Y_i$ be the union of $X_i$ and the collection of all the elements we chose. So, $X_i\subseteq Y_i\subseteq F_{\alpha_i}$ and $Y_i$ is finite.
    \end{itemize}
    Note that if $y\in Y_i$ and $g\in G$, then $A = (g^{-1}y g) F_{\alpha_i-1}$ is a conjugate of $x F_{\alpha_i-1}$ for some $x\in X_i$. Thus $A = z F_{\alpha_i-1}$ for some $z\in Y_i$, and so $z^{-1} (g^{-1} y g)\in F_{\alpha_i-1}$ for some $z\in Y_i$. Let
    $$X_{i+1} = \{ z^{-1} (g^{-1} y g) \ | \ z^{-1} (g^{-1} y g)\in F_{\alpha_i-1},\ g\in S,\ y,z\in Y_i\}.$$
    Note that $X_{i+1}$ is finite and $X_{i+1}\subseteq F_{\alpha_i-1}$. So, $\alpha_{i+1}\leq \alpha_i-1$. To show that $\alpha_{i+1} = \alpha_i-1$, we just need to show that $\alpha_i \leq \alpha_{i+1}+1$, i.e.\ $X_i\subseteq F_{\alpha_{i+1}+1}$, which is the same as showing that $x F_{\alpha_{i+1}}$ is FC in $G/F_{\alpha_{i+1}}$ for all $x\in X_i$. Since $X_i\subseteq Y_i$, it suffices to show that $y F_{\alpha_{i+1}}$ is FC in $G/F_{\alpha_{i+1}}$ for all $y\in Y_i$.\\
    Since $X_{i+1}\subseteq F_{\alpha_{i+1}}$, for any $y\in Y_i$ and $s\in S$, there exists a $z\in Y_i$ with $z^{-1} (s^{-1}ys)\in F_{\alpha_{i+1}}$. Since $S$ generates $G$ and $F_{\alpha_{i+1}}$ is normal in $G$, the same holds for any $g\in G$ replacing $s\in S$. Thus $Y_i F_{\alpha_{i+1}}\subset G/F_{\alpha_{i+1}}$ is closed under taking conjugates. Since $Y_i$ is finite, $y F_{\alpha_{i+1}}$ is thus FC in $G/F_{\alpha_{i+1}}$ for any $y\in Y_i$. This completes the proof.
  \end{proof}

  Now, we show that a finitely generated group has finite FC-rank if and only if it is virtually nilpotent. For $n\in\N$, denote the class of finitely generated hyper-FC central groups of FC-rank less than or equal to $n$ by $\mathcal{FC}_n$, and the class of finitely generated virtually nilpotent groups of rank less than or equal to $n$ by $\mathcal{VN}_n$.
  
  First we prove a useful lemma. 
  \begin{lemma}
  \label{lemma:f.g. fc}
    Let $G$ be a group, and $H$ be a finitely generated subgroup of the FC-center of $G$. The centralizer of $H$ in $G$, denoted by $\cent{H}$, has finite index in $G$.
  \end{lemma}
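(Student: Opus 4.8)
The plan is to reduce the statement about the whole subgroup $H$ to a statement about finitely many individual elements, where the FC hypothesis directly applies. Since $H$ is finitely generated, I would fix a finite generating set $h_1,\ldots,h_k$ of $H$. The first observation is that an element of $G$ centralizes $H$ if and only if it centralizes each generator, since commuting with each $h_i$ forces commuting with every word in the $h_i$'s. This gives the key identity
$$
\cent{H} = \bigcap_{i=1}^{k} \cent{h_i}.
$$

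Next I would bound the index of each individual centralizer. The point of the hypothesis $H \leqslant$ FC-center is that each generator $h_i$ lies in the FC-center, hence has only finitely many conjugates in $G$. By the orbit--stabilizer correspondence for the conjugation action of $G$ on itself, the set of conjugates of $h_i$ is in bijection with the coset space $G/\cent{h_i}$, so
$$
[G : \cent{h_i}] = \#\{\text{conjugates of } h_i \text{ in } G\} < \infty.
$$
Thus each $\cent{h_i}$ is a finite-index subgroup of $G$.

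Finally I would combine these finitely many finite-index subgroups using Poincaré's observation that a finite intersection of finite-index subgroups again has finite index, with the quantitative bound $[G : \bigcap_i \cent{h_i}] \leqslant \prod_i [G : \cent{h_i}]$. Applying this to the identity from the first step yields that $\cent{H}$ has finite index in $G$, completing the proof.

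I do not expect a genuine obstacle here: the argument is a clean assembly of orbit--stabilizer and the finite-intersection bound, and the only thing to be careful about is the reduction $\cent{H} = \bigcap_i \cent{h_i}$, which is precisely where finite generation of $H$ is used. Everything else is the standard finite-index bookkeeping.
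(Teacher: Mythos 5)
Your proposal is correct and follows essentially the same route as the paper: write $\cent{H}=\bigcap_i \cent{h_i}$ over a finite generating set, note each $\cent{h_i}$ has finite index because $h_i$ is FC, and intersect. The paper states this compactly; you merely make explicit the orbit--stabilizer identification and the Poincar\'e finite-intersection bound, which the paper leaves implicit.
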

  \begin{proof}
    Let $\{h_1,...h_n\}$ be a set of generators for $H$. Note that for each $h_i$, since it is FC in $G$, its centralizer $\cent{h_i}$ has finite index. Thus, the intersection of the centralizers $\cap_{i=1}^{n} \cent{h_i}$, which is the same as $\cent{H},$ has finite index in $G$.
  \end{proof}
  
  \begin{claim}
  \label{claim:f.g. nilpotent fc}
    We have
    $$\mathcal{VN}_0 \subseteq \mathcal{FC}_1\subseteq \mathcal{VN}_1\subseteq \mathcal{FC}_2 \subseteq \cdots\subseteq \mathcal{FC}_n \subseteq \mathcal{VN}_n \subseteq \mathcal{FC}_{n+1} \subseteq \cdots.$$
  \end{claim}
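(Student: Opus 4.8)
The plan is to establish the two families of inclusions $\mathcal{VN}_n \subseteq \mathcal{FC}_{n+1}$ and $\mathcal{FC}_n \subseteq \mathcal{VN}_n$ separately, each by induction on $n$, after which the displayed chain is just their interleaving. Here I read ``rank $\le n$'' for a virtually nilpotent group as ``having a finite-index nilpotent subgroup of nilpotency class $\le n$''. I will use two elementary bookkeeping facts about the upper FC-series, both consequences of the fact (already used above) that a surjective homomorphism sends FC-elements to FC-elements. First, directly from the recursive definition, $F_\alpha(G/F_1(G)) = F_{\alpha+1}(G)/F_1(G)$; in particular FC-rank $\le n$ for $G$ forces FC-rank $\le n-1$ for $G/F_1(G)$. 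Second, if $K \trianglelefteq G$ and $K \subseteq F_1(G)$, then $\pi^{-1}(F_\alpha(G/K)) \subseteq F_{\alpha+1}(G)$ for the quotient map $\pi\colon G \to G/K$ (transfinite induction on $\alpha$), so that the FC-rank of $G$ exceeds that of $G/K$ by at most $1$.

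For $\mathcal{VN}_n \subseteq \mathcal{FC}_{n+1}$ I would induct on $n$, the case $n=0$ being the statement that finite groups are FC. Given $G \in \mathcal{VN}_n$, replacing a finite-index class-$(\le n)$ nilpotent subgroup by its normal core I may assume it is a normal subgroup $N \trianglelefteq G$. The center $Z(N)$ is characteristic in $N$, hence normal in $G$, and each $z \in Z(N)$ is centralized by all of $N$, so $C_G(z) \supseteq N$ has finite index and $Z(N) \subseteq F_1(G)$. Now $G/Z(N)$ is finitely generated and contains the finite-index subgroup $N/Z(N)$, which is nilpotent of class $\le n-1$; thus $G/Z(N) \in \mathcal{VN}_{n-1} \subseteq \mathcal{FC}_n$ by induction, i.e.\ has FC-rank $\le n$. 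The second bookkeeping fact, applied with $K = Z(N)$, then gives FC-rank $\le n+1$ for $G$, so $G \in \mathcal{FC}_{n+1}$.

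For $\mathcal{FC}_n \subseteq \mathcal{VN}_n$ I would again induct, the base case $n=1$ being that a finitely generated FC-group $G$ is virtually abelian: this is Lemma~\ref{lemma:f.g. fc} applied to $H = G$, giving that $Z(G) = C_G(G)$ has finite index. For the inductive step take $G \in \mathcal{FC}_n$; by the first bookkeeping fact $\bar G := G/F_1(G)$ lies in $\mathcal{FC}_{n-1} \subseteq \mathcal{VN}_{n-1}$, so it has a finite-index nilpotent subgroup of class $\le n-1$. Pulling this back along $\pi\colon G \to \bar G$ produces a finite-index (hence finitely generated) subgroup $M \le G$ containing $A := F_1(G)$, with $A$ contained in the FC-center of $M$ and $M/A$ nilpotent of class $\le n-1$; in particular $\gamma_n(M) \subseteq A$ lies in the FC-center of $M$.

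The main obstacle is that $A = F_1(G)$ need not be finitely generated, so Lemma~\ref{lemma:f.g. fc} cannot be invoked for $A$ itself. The key point that unblocks this is that $\gamma_n(M)$ \emph{is} finitely generated: using the standard description of $\gamma_n(M)$ as generated by the $M$-conjugates of the finitely many left-normed weight-$n$ commutators in a finite generating set of $M$, and using that each such commutator, being in the FC-center, has only finitely many conjugates, one gets a finite generating set for $\gamma_n(M)$. Lemma~\ref{lemma:f.g. fc} then applies to the finitely generated subgroup $\gamma_n(M)$ of the FC-center, so $M_0 := C_M(\gamma_n(M))$ has finite index in $M$. Since $\gamma_n(M_0) \subseteq \gamma_n(M)$ is centralized by $M_0$, we obtain $\gamma_{n+1}(M_0) = [\gamma_n(M_0), M_0] = \{e\}$, i.e.\ $M_0$ is nilpotent of class $\le n$ and of finite index in $G$; hence $G \in \mathcal{VN}_n$, completing the induction.
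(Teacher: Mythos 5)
Your proof is correct and follows essentially the same route as the paper: the same two interleaved chains of inclusions, the same key finite-generation argument (the weight-$n$ commutators of generators lie in the FC-center, so their finitely many conjugates give a finite generating set for the last relevant term of the lower central series), and the same appeal to Lemma~\ref{lemma:f.g. fc} to obtain a finite-index centralizer. The only cosmetic differences are that you organize $\mathcal{VN}_n \subseteq \mathcal{FC}_{n+1}$ as an induction through $Z(N)$ with an explicit bookkeeping lemma (where the paper writes down the FC-series directly), and you close $\mathcal{FC}_n \subseteq \mathcal{VN}_n$ by computing $\gamma_{n+1}\left(C_M(\gamma_n(M))\right) = \{e\}$ directly rather than via the second isomorphism theorem---minor streamlinings of the paper's argument.
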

  \begin{proof}
    First, we show that $\mathcal{VN}_{n-1}\subseteq \mathcal{FC}_n$ for any $n\in\N$. Let $G$ be a group in $\mathcal{VN}_{n-1}$ for $n\in\N$. Let $N\trianglelefteq G$ be a finite index normal subgroup with the upper central series
    $$\{e\}=Z_0 \leqslant Z_1 \leqslant \cdots \leqslant Z_m = N,$$
    where $m\leq n-1$. Since $Z_1$ is the center of a normal subgroup of $G$, we get that $Z_1$ is normal in $G$. Similarly, we can show that each $Z_k$ is normal in $G$. Since $Z_k/Z_{k-1}$ is in the center of $N/Z_{k-1}$, we get that $N/Z_{k-1} \leqslant C_{G/Z_{k-1}}(z Z_{k-1})$ for any $z\in Z_k$, which means that $C_{G/Z_{k-1}}(z Z_{k-1})$ is of finite index in $G/Z_{k-1}$ for any $z\in Z_k$. So, $Z_k/Z_{k-1}$ is in the FC-center of $G/Z_{k-1}$. Obviously, since $G/N$ is finite, we have that $G/N$ is FC. So, we have that
    $$\{e\}=Z_0 \leqslant Z_1 \leqslant \cdots \leqslant Z_m = N \leqslant G$$
    is an FC-series for $G$ with length $m+1\leq n$. So, $G$ belongs to $\mathcal{FC}_n$.

    Now, by induction on $n\in \N$ we show that $\mathcal{FC}_n\subseteq \mathcal{VN}_n$. Let $G$ be a group that belongs to $\mathcal{FC}_1$. By Lemma~\ref{lemma:f.g. fc}, the center of $G$ has finite index in $G$. So, $G$ is virtually abelian, which means that $G$ belongs to $\mathcal{VN}_1$. Thus $\mathcal{FC}_1 \subseteq \mathcal{VN}_1$.
    
    Let $G$ be a group that belongs to $\mathcal{FC}_n$ for $n\geq 2$. Let
    $$\{e\} = F_0 \leqslant F_1 \leqslant \cdots \leqslant F_m = G$$
    be the upper FC-series of $G$, where $m\leq n$. Since $G/F_1$ is in $\mathcal{FC}_{m-1}$, by the induction hypothesis we know that $G/F_1$ is virtually nilpotent of rank at most $m-1$. So, there is a normal subgroup $N\trianglelefteq G$ with finite index such that $F_1\leqslant N$ and $N/F_1$ is nilpotent of rank at most $m-1$. We can make the following observations:
    \begin{itemize}
      \item Since $N$ has finite index in a finitely generated group, $N$ is finitely generated. Let $S$ be a finite symmetric set of generators for $N$.
      \item Let $N = \Gamma_0 \trianglerighteq \Gamma_1 \trianglerighteq \cdots \trianglerighteq \Gamma_{m-1}$ be the first $m$ subgroups in the lower central series of $N$. Since $N/F_1$ is nilpotent of rank at most $m-1$, we know that $\Gamma_{m-1}\leqslant F_1$. So, $\Gamma_{m-1}$ is FC.
      \item It is easy to see that $\Gamma_{m-1}$ is the least normal subgroup of $N$ that contains all the $(m-1)$-fold commutators $[s_1,s_2,\ldots,s_{m-1}]$, where $s_i$'s are elements of $S$. Note that 1) since $\Gamma_{m-1}$ is FC, we know that each of $[s_1,s_2,\ldots,s_{m-1}]$ has finitely many conjugates in $N$, and 2) since $S$ is finite, we have finitely many elements of the form $[s_1,s_2,\ldots,s_{m-1}]$. So, $\Gamma_{m-1}$ is finitely generated.
    \end{itemize}
    From the last two observations we know that $\Gamma_{m-1}$ is a finitely generated FC subgroup of $N$. By Lemma~\ref{lemma:f.g. fc}, we know that $C_N(\Gamma_{m-1})$ has finite index in $N$. Obviously, $C_N(\Gamma_{m-1})$ has a normal subgroup $M$ with finite index in $N$. It is clear that $Z=\Gamma_{m-1}\cap M$ is in the center of $M$. Since $N$ has finite index in $G$, we get that $M$ also has finite index in $G$.
    
    By the second isomorphism theorem for groups, we have that
    $$M/Z = M/(\Gamma_{m-1}\cap M)\cong (M\Gamma_{m-1})/\Gamma_{m-1} \leqslant N/\Gamma_{m-1}.$$
    But we know that $N/\Gamma_{m-1}$ is nilpotent with rank at most $m-1$. So, $M/Z$ is nilpotent with rank at most $m-1$. Also, $Z$ is in the center of $M$. Hence, $M$ is nilpotent with rank at most $m\leq n$. So, $G$ is virtually nilpotent with rank at most $n$.
  \end{proof}
  
  Now, we can show that (1) and (2) are equivalent. Let $G$ be a finitely generated group.\\
  If $G$ is a virtually nilpotent group of rank $n$, then by Claim~\ref{claim:f.g. nilpotent fc} we know that it is FC with FC-rank at most $n+1$.\\
  If, on the other hand, $G$ is hyper-FC central, then by Claim~\ref{claim:f.g. hyper-FC central} we know that its FC-rank is finite, say $n\in \N$. So, by Claim~\ref{claim:f.g. nilpotent fc} we know that it is virtually nilpotent of rank at most $n$.
\end{proof}

\bibliography{main.bib}
\end{document}